\theoremstyle{theorem}
\newtheorem{theorem}{Theorem}[section]
\newtheorem{lemma}[theorem]{Lemma}
\theoremstyle{definition}
\theoremstyle{remark}
\numberwithin{equation}{section}
\tikzset{commutative diagrams/.cd}
\newcommand\C{{\mathbb C}}
\newcommand\Fq{{{\mathbb F}_q}}
\DeclareMathOperator{\Hom}{Hom}
\DeclareMathOperator{\End}{End}
\DeclareMathOperator{\Spec}{Spec}
\DeclareMathOperator{\GL}{GL}
\DeclareMathOperator{\Mat}{Mat}
\newcommand\subeq{\subseteq}
\def\acts{\curvearrowright}
\DeclarePairedDelimiter{\abs}{\lvert}{\rvert}
\DeclarePairedDelimiter{\set}{\{}{\}}
\DeclarePairedDelimiter{\parens}{\lparen}{\rparen}
\DeclarePairedDelimiter\floor{\lfloor}{\rfloor}
\DeclareMathOperator{\ord}{ord}
\newcommand{\F}{\mathbb F}
\newcommand{\fS}{\mathcal S}
\newcommand{\cP}{\mathcal P}
\newcommand{\g}{\mathfrak g}
\begin{document}
\title{Counting on the variety of modules over the quantum plane}
\author{Yifeng Huang\footnote{Dept.\ of Mathematics, University of Michigan. Email Address: {\tt huangyf@umich.edu}. The author is supported by the Research Training Grant (RTG) in Number Theory and Representation Theory at the University of Michigan.}}
\date{\today}

\bibliographystyle{abbrv}

\maketitle
\begin{abstract}
Let $\zeta$ be a fixed nonzero element in a finite field $\Fq$ with $q$ elements. In this article, we count the number of pairs $(A,B)$ of $n\times n$ matrices over $\Fq$ satisfying $AB=\zeta BA$ by giving a generating function. This generalizes a generating function of Feit and Fine that counts pairs of commuting matrices. Our result can be also viewed as the point count of the variety of modules over the quantum plane $xy=\zeta yx$, whose geometry was described by Chen and Lu. 
\end{abstract}

\section{Introduction}
\subsection{Main results}
Fix a nonzero element $\zeta$ in $\Fq$, the finite field with $q$ elements. Let $\ord(\zeta)$ denote the smallest positive integer $m$ such that $\zeta^m=1$ in $\Fq$. We define the set of $\Fq$-points of the \emph{$\zeta$-commuting variety} to be
\begin{equation}
K_{\zeta,n}(\Fq):=\{(A,B)\in \Mat_n(\Fq)\times \Mat_n(\Fq): AB=\zeta BA\}.
\end{equation}

The $\zeta$-commuting variety $K_{\zeta,n}$ can be viewed as the variety of $n$-dimensional modules over the algebra of the quantum plane, namely, the noncommutative associate algebra in variables $x$ and $y$ such that $xy=\zeta yx$. The geometry of the $\zeta$-commuting variety was studied by Chen and Lu \cite{chenlu2019}, where explicit descriptions of its irreducible components and of a GIT quotient were given. The combinatorics of the $\zeta$-commuting has also been studied when $\zeta=1$: Feit and Fine \cite{feitfine1960pairs} gave an explicit formula for the point count of the \emph{commuting} variety (namely, $K_{1,n}$) over a finite field, and Bryan and Morrison \cite{bryanmorrison2015motivic} proved that the ``same'' formula computes the motivic class of the commuting variety (over $\C$) in the Grothendieck ring of varieties. 

The focus of this paper is to count the cardinality of $K_{\zeta,n}(\Fq)$ for $\zeta$ in general. As a special case, the cardinality of $K_{1,n}(\Fq)$, the set of pairs of commuting matrices, was determined by Feit and Fine \cite{feitfine1960pairs} in the form of a generating function. We give a generating function for $\abs{K_{\zeta,n}(\Fq)}$ that generalizes the $\zeta=1$ case. 

\begin{theorem}\label{thm:A}
Let $m=\ord(\zeta)$; in other words, $\zeta$ is a primitive $m$-th root of unity of $\Fq$. We have the following identity of power series in $x$:
\begin{equation}
\sum_{n=0}^\infty \frac{\abs{K_{\zeta,n}(\Fq)}}{(q^n-1)(q^n-q)\dots(q^n-q^{n-1})} x^n = \prod_{i=1}^\infty F_m(x^i;q),
\end{equation}
where
\begin{equation}
F_m(x;q):=\dfrac{1-x^m}{(1-x)(1-x^mq)} \cdot \dfrac{1}{(1-x)(1-xq^{-1})(1-xq^{-2})\dots}.
\end{equation}
\end{theorem}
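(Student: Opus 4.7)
The plan is to reformulate the count as an automorphism-weighted sum over isomorphism classes of modules over the quantum plane algebra $R_\zeta := \Fq\langle x,y\rangle/(xy - \zeta yx)$, and then exploit the large centre of $R_\zeta$ via an Euler product. The group $\GL_n(\Fq)$ acts on $K_{\zeta,n}(\Fq)$ by simultaneous conjugation; orbits correspond to isomorphism classes of $n$-dimensional $R_\zeta$-modules, and stabilisers equal automorphism groups, so Burnside's formula gives $|K_{\zeta,n}(\Fq)|/|\GL_n(\Fq)| = \sum_{[M]} 1/|\Aut_{R_\zeta}(M)|$. A direct check confirms that the centre of $R_\zeta$ is $Z := \Fq[x^m, y^m]$ and that $R_\zeta$ is free of rank $m^2$ over $Z$; consequently every finite-dimensional $R_\zeta$-module splits canonically as $M = \bigoplus_\mathfrak{p} M_\mathfrak{p}$ over closed points of $\maxSpec Z \cong \A^2_{\Fq}$, with multiplicative automorphism groups. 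This yields the Euler product
\begin{equation*}
\sum_{n\ge 0} \frac{|K_{\zeta,n}(\Fq)|}{|\GL_n(\Fq)|}\, x^n \;=\; \prod_{\mathfrak p \in \maxSpec Z} Z_\mathfrak p(x), \qquad Z_\mathfrak p(x) := \sum_{\substack{[M]\\ \mathrm{supp}(M) = \{\mathfrak p\}}} \frac{x^{\dim_{\Fq} M}}{|\Aut_{R_\zeta}(M)|}.
\end{equation*}

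I would then compute each $Z_\mathfrak p$ according to the stratification of $\A^2_{\Fq}$ by the axes: the open stratum $U = \{x^m y^m \ne 0\}$, the two axes minus the origin, and the origin itself. On $U$, the algebra $R_\zeta$ is the $\zeta$-quantum torus, which is Azumaya of rank $m^2$ over $Z$; Morita equivalence identifies $R_\zeta$-modules supported at $\mathfrak p$ with finite-length modules over the $2$-dimensional complete regular local ring $\hhat Z_\mathfrak p$, with $\Fq$-dimensions rescaled by the factor $m\deg(\mathfrak p)$, so the local factor is a rescaled local Feit--Fine contribution. On each axis, one of the operators acts nilpotently and the other invertibly, so indecomposables admit a direct classification by a partition together with a scalar on the non-vanishing axis. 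At the origin, both operators are nilpotent, and one invokes Chen--Lu's description of the nilpotent $\zeta$-commuting variety to classify the indecomposables and compute $|\Aut|$. The final step aggregates the local factors over closed points of each degree via the Weil zeta functions of $U$, $\mathbb G_m$, and a point, and simplifies the resulting product to match $\prod_{i\ge 1} F_m(x^i;q)$.

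The principal obstacle is the origin stratum. Already for $\zeta = 1$, the nilpotent-nilpotent case is the technical heart of Feit--Fine; for $\zeta \ne 1$ the $\zeta$-twist genuinely alters the structure of such modules, and the $m$-dependent prefactor $(1-x^m)/[(1-x)(1-x^m q)]$ in $F_m$ is plausibly the combinatorial fingerprint of this new behaviour. A secondary challenge is the final assembly: verifying that the three stratum-wise Euler products, after regrouping via Weil-zeta-function identities, collapse into the compact form $\prod_{i \ge 1} F_m(x^i;q)$.
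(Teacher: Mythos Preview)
Your Euler-product plan over $\maxSpec Z$ with $Z=\Fq[x^m,y^m]$ is a genuinely different route from the paper's, and the four strata you name (open torus, two punctured axes, origin) do correspond---via a double Fitting decomposition---to a four-fold factorisation of the generating function. The paper takes a much shorter path: it applies Fitting's lemma to $A$ \emph{alone}, factoring the generating function as $U\cdot N$ with $U$ the ``$A$ invertible'' piece and $N$ the ``$A$ nilpotent'' piece. For $U$, an orbit--stabiliser argument reduces to counting similarity classes with $B\sim\zeta B$, which is read off from rational canonical forms and gives $\prod_i G_m(x^i;q)$. For $N$, an explicit block description of all $B$ with $A_\pi B=\zeta BA_\pi$ shows that the count is literally independent of $\zeta$, hence equals the Feit--Fine nilpotent count $\prod_i H(x^i;q)$. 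No centre, no Azumaya structure, and no classification of modules at the origin is used.

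Your proposal, however, contains a misdiagnosis that would derail the execution: you locate the $m$-dependent factor $(1-x^m)/[(1-x)(1-x^mq)]$ at the origin, whereas in fact the origin stratum is $\zeta$-independent and \emph{all} of the $m$-dependence lives on the open torus. Refining the paper's decomposition by a second Fitting step on $B$ gives $K=UU\cdot UN\cdot NU\cdot NN$ matching your four strata. Since every nilpotent $B$ satisfies $B\sim\zeta B$, the orbit--stabiliser count yields $\sum_n |UN_{\zeta,n}|/|\GL_n(\Fq)|\,x^n=\sum_n p(n)\,x^n=\prod_i(1-x^i)^{-1}$, independent of $\zeta$; by the $(A,B)\leftrightarrow(B,A)$ symmetry the same holds for $NU$. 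Dividing the paper's formulas for $U$ and $N$ by this common factor gives
\[
NN(x)=\prod_{i\ge1}\prod_{j\ge1}\frac{1}{1-x^iq^{-j}},\qquad UU(x)=\prod_{i\ge1}\frac{1-x^{im}}{1-qx^{im}},
\]
so the automorphism-weighted count at the origin does not see $\zeta$ at all (even though, as you note, the module \emph{category} there genuinely changes), while your ``rescaled Feit--Fine'' on the torus---the rescaling $x\mapsto x^m$ coming from the rank-$m$ simple module---is precisely the source of $G_m$. Thus your ``principal obstacle'' is a red herring, and the stratum you treated as routine carries the content. A smaller caution: your Morita step requires the quantum torus to be a \emph{split} Azumaya algebra over each $\hhat Z_{\mathfrak p}$, not merely Azumaya; this is true here but deserves an argument rather than an assertion.
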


We note that $\abs{K_{\zeta,n}(\Fq)}$ only depends on the order $m$ of $\zeta$. When $m=1$, we recover the generating function given by Feit and Fine. 

Theorem \ref{thm:A} is a direct consequence of the following result, which in itself can be viewed as a refinement of Theorem \ref{thm:A}. We define
\begin{equation}
U_{\zeta,n}(\Fq):=\{(A,B)\in \Mat_n(\Fq)\times \Mat_n(\Fq): AB=\zeta BA,\text{ $A$ nonsingular}\},
\end{equation}
and
\begin{equation}
N_{\zeta,n}(\Fq):=\{(A,B)\in \Mat_n(\Fq)\times \Mat_n(\Fq): AB=\zeta BA,\text{ $A$ nilpotent}\}.
\end{equation}

When $\zeta=-1$, the variety $N_{-1,n}$ is the semi-nilpotent anti-commuting variety, whose irreducible components and their dimensions are explicitly described by Chen and Wang \cite{chenwang2020}. 

For brevity reason, we put $\abs{\GL_n(\Fq)}$ in place of $(q^n-1)(q^n-q)\dots(q^n-q^{n-1})$ in the formulas below, noting that $\abs{\GL_n(\Fq)}=(q^n-1)(q^n-q)\dots(q^n-q^{n-1})$.

\begin{theorem}\label{thm:B}
Let $m=\ord(\zeta)$. We have the following identities of power series in $x$:
\begin{enumerate}
\item
\begin{equation}
\sum_{n=0}^\infty \frac{\abs{K_{\zeta,n}(\Fq)}}{\abs{\GL_n(\Fq)}} x^n = \parens*{\sum_{n=0}^\infty \frac{\abs{U_{\zeta,n}(\Fq)}}{\abs{\GL_n(\Fq)}} x^n} \parens*{\sum_{n=0}^\infty \frac{\abs{N_{\zeta,n}(\Fq)}}{\abs{\GL_n(\Fq)}} x^n}
\end{equation}

\item 
\begin{equation}
\sum_{n=0}^\infty \frac{\abs{U_{\zeta,n}(\Fq)}}{\abs{\GL_n(\Fq)}} x^n = \prod_{i=1}^\infty G_m(x^i;q),
\end{equation}
where
\begin{equation}
G_m(x;q):=\frac{1-x^m}{(1-x)(1-x^mq)}.
\end{equation}

\item 
\begin{equation}
\sum_{n=0}^\infty \frac{\abs{N_{\zeta,n}(\Fq)}}{\abs{\GL_n(\Fq)}} x^n = \prod_{i=1}^\infty H(x^i;q),
\end{equation}
where
\begin{equation}
H(x;q):=\frac{1}{(1-x)(1-xq^{-1})(1-xq^{-2})\dots}.
\end{equation}
\end{enumerate}
\end{theorem}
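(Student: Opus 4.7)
I will prove parts (a), (b), (c) in turn. Part (a) is a direct Fitting-decomposition argument, while (b) and (c) rely on a common module-theoretic reformulation.

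\emph{Part (a).} Iterating $AB=\zeta BA$ to $A^kB = \zeta^k BA^k$ shows that $B$ preserves both $\ker(A^k)$ and $\im(A^k)$, and hence preserves the canonical Fitting decomposition $\F_q^n = V_u \oplus V_n$ into the invertible and nilpotent parts of $A$. Thus each $(A,B) \in K_{\zeta,n}$ decomposes uniquely into restrictions in $U_{\zeta,\dim V_u}$ and $N_{\zeta,\dim V_n}$. Since there are exactly $|\GL_n|/(|\GL_k|\cdot|\GL_{n-k}|)$ ordered direct-sum decompositions of $\F_q^n$ with $\dim V_u = k$, we obtain
\[ |K_{\zeta,n}| = \sum_{k=0}^n \frac{|\GL_n|}{|\GL_k|\cdot|\GL_{n-k}|}\cdot|U_{\zeta,k}|\cdot|N_{\zeta,n-k}|, \]
which is the desired convolution after dividing by $|\GL_n|$.

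\emph{Common setup for (b) and (c).} For fixed $A$, the relation $AB = \zeta BA$ says exactly that $B\colon V \to V''$ is an $\F_q[T]$-module homomorphism, where $V = (\F_q^n, A)$ and $V'' = (\F_q^n, \zeta^{-1}A)$. Thus $|\{B:AB=\zeta BA\}| = |\Hom_{\F_q[T]}(V,V'')|$, and summing over conjugacy classes of $A$ via orbit-stabilizer gives $|X_{\zeta,n}|/|\GL_n| = \sum_{[A]} |\Hom(V_A, V''_A)|/|\Aut(V_A)|$, where $X = N$ or $U$ according to the case.

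\emph{Part (c).} A nonzero scalar multiple of a nilpotent matrix has the same Jordan type, so $V \cong V''$ for any nilpotent $A$ of Jordan type $\lambda$, and therefore $|\Hom(V,V'')| = |\End(V_\lambda)|$, independent of $\zeta$. The partition sum becomes $|N_{\zeta,n}|/|\GL_n| = \sum_{\lambda\vdash n}|\End(V_\lambda)|/|\Aut(V_\lambda)|$. By Macdonald's formula $|\End(V_\lambda)|/|\Aut(V_\lambda)| = 1/\prod_j\phi_{m_j(\lambda)}(q^{-1})$ with $\phi_k(t) = \prod_{i=1}^k (1-t^i)$, combined slot-by-slot with the $q$-exponential identity $\sum_k s^k/\phi_k(t) = \prod_{j\geq 0}(1-st^j)^{-1}$, the generating function evaluates to $\prod_{i\geq 1} H(x^i;q)$, proving (c).

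\emph{Part (b), and the main obstacle.} Now $V$ and $V''$ are $\F_q[x^{\pm 1}]$-modules, and the twist shifts primary components: $V''_p = V_{\sigma p}$, where $\sigma$ acts on monic irreducible polynomials $p \ne x$ by rescaling roots by $\zeta$ (so $\sigma$ has order dividing $m$). Hence $|\Hom(V,V'')| = \prod_p |\Hom(V_p, V_{\sigma p})|$, and grouping primes by $\sigma$-orbits $O$ of size $s \mid m$ and common degree $d$, the generating function factors as $\prod_O C_O(x)$, where each orbit contribution $C_O(x)$ is a cyclic Hom-sum over $s$-tuples of partitions weighted by Hall-algebra factors around the cycle. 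The target identity $\prod_{i\geq 1}G_m(x^i;q)$ should follow by evaluating $C_O(x)$ in closed form for each profile $(s,d)$ and reindexing, via the standard Frobenius-orbit count of irreducible polynomials, into the stated single-index product. The main obstacle is the cyclic evaluation for $s \geq 2$: mismatched Jordan types around the cycle introduce a twist absent in the classical Feit-Fine setting (where $m=1$ forces $s=1$), and handling it will likely require a cyclic-quiver partition identity or an explicit indecomposable analysis guided by the Azumaya structure of $\F_q\langle x^\pm, y\rangle/(xy-\zeta yx)$ over its center $\F_q[x^{\pm m}, y^m]$.
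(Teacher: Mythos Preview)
Your arguments for (a) and (c) are correct and match the paper's approach in spirit. For (a) the paper proves the block-diagonality of $B$ via an explicit matrix computation rather than your cleaner observation that $A^kB=\zeta^kBA^k$ forces $B$ to preserve $\ker A^k$ and $\im A^k$; for (c) the paper writes out the explicit block form of $B$ with $A_\pi B=\zeta BA_\pi$ to see $\zeta$-independence, whereas you invoke the module isomorphism $(\Fq^n,A)\cong(\Fq^n,\zeta^{-1}A)$ for nilpotent $A$---same content, different packaging.

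For (b), however, you have a genuine gap, and you have chosen the hard side of the problem. By fixing the invertible matrix $A$ and summing $|\Hom_{\Fq[T]}(V,V'')|$ over conjugacy classes of $A$, you land on the cyclic Hom-sum over $\sigma$-orbits that you yourself flag as the obstacle; resolving it really would require a nontrivial cyclic-quiver identity. The paper avoids all of this with a single move: \emph{swap the roles of $A$ and $B$}. Since $A$ is invertible, the relation $AB=\zeta BA$ reads $ABA^{-1}=\zeta B$, so for a fixed $B$ the set of admissible $A$ is either empty (if $B\not\sim\zeta B$) or a coset of the centralizer of $B$ in $\GL_n$, of size $|\GL_n|/|\beta|$ where $\beta$ is the conjugacy class of $B$. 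Summing over $B$ collapses to
\[
|U_{\zeta,n}(\Fq)| \;=\; |\GL_n(\Fq)|\cdot \#\{\text{conjugacy classes }\beta:\ B\sim\zeta B\}.
\]
Now the problem is purely combinatorial: characterize those $\beta$ with $B\sim\zeta B$. Comparing invariant factors shows this holds iff each $g_i(t)$ equals its rescaling $\zeta^{\deg g_i}g_i(\zeta^{-1}t)$, i.e.\ iff each $g_i$ has the shape $t^bG(t^m)$. Counting such invariant-factor sequences (via the substitution $h_i=g_{r+1-i}/g_{r-i}$ and the observation that there are $q^{\lfloor b/m\rfloor}$ monic degree-$b$ polynomials of this shape) gives $\sum_n |\fS_{\zeta,n}|x^n=\prod_i G_m(x^i;q)$ directly, with no orbit analysis or Hall-algebra machinery required. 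Your $\sigma$-orbit decomposition is not wrong, but it reproduces on the $A$-side structure that becomes trivial once you condition on $B$ instead.
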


Using Theorem \ref{thm:B}(a), Theorem \ref{thm:A} follows from the observation $F_m(x;q)=G_m(x;q)H(x;q)$.

Note that Theorem \ref{thm:B}(c) implies that $\abs{N_{\zeta,n}(\Fq)}$ does not depend on $m$ or $\zeta$, as long as $\zeta\neq 0$. In particular, $\abs{N_{\zeta,n}(\Fq)}$ always equals $\abs{N_{1,n}(\Fq)}$, which is known to Feit and Fine. Therefore, the nontrivial dependence of $\abs{K_{\zeta,n} (\Fq)}$ on $\zeta$ stems purely from that of $\abs{U_{\zeta,n}(\Fq)}$. 

\subsection{History and related work}
An important starting case in the study of varieties of modules is the commuting variety $K_{1,n}=\set{(A,B):A,B\in \Mat_n, AB=BA}$. The commuting variety over $\C$ was shown to be irreducible by Gerstenhaber \cite{gerstenhaber1961} and Motzkin and Taussky \cite{motzkintaussky1955}. Its point count was given by Feit and Fine \cite{feitfine1960pairs}. This result was reproved by Bryan and Morrison \cite{bryanmorrison2015motivic} from the perspective of motivic Donaldson--Thomas theory. 

The commuting variety can be viewed in the context of Lie algebras. Let $(\g,[\cdot,\cdot])$ be a Lie algebra over an algebraically closed field. Define the \emph{commuting variety} of $\g$ as
\begin{equation}
C(\g):=\set{(x,y)\in \g\times \g: [x,y]=0},
\end{equation}
then $K_{1,n}$ is the commuting variety of the Lie algebra of $n$ by $n$ matrices. As a generalization of the irreducibility result of $K_{1,n}$, Richardson \cite{richardson1979commuting} showed that the commuting variety of any reductive Lie algebra over $\C$ is irreducible. Levy \cite{levy2019commuting} extended this result to positive characteristic under mild restrictions on the Lie algebra. On the combinatorics side, Fulman and Guralnick \cite{fulmanguralnick2018} generalized the point-count result of Feit and Fine to commuting varieties of unitary groups and of odd characteristic sympletic groups. We also point out some papers that relate counting problems in Lie algebras to maximal tori of Lie groups; see \cite{fjrw2017generating} and \cite{lehrer1992rational}.

The focus of this paper, the $\zeta$-commuting variety $K_{\zeta,n}$, is another generalization of the commuting variety $K_{1,n}$. When $\zeta=-1$, we get the anti-commuting variety, whose geometry over $\C$ was studied by Chen and Wang \cite{chenwang2020}. They gave explicit descriptions of the irreducible components of $K_{-1,n}$ and of several variants. The above work was extended to general $\zeta$ by Chen and Lu \cite{chenlu2019}. It is worth noting that $K_{\zeta,n}$ is not irreducible unless $\zeta=1$. The main contribution of our paper is the point count of $K_{\zeta,n}$.

The point count of $K_{\zeta,n}$ can also be viewed as statistical information on the classification of modules over the quantum plane. In specific, $K_{\zeta,n}(\Fq)$ is the weighted count of isomorphism classes of $n$-dimensional modules over the quantum plane over $\Fq$, with weight inversely proportional to the size of the automorphism group. The classification is studied by Bavula \cite{andrewspartitions}. 

\section{Proof of Theorem \ref{thm:B}(a)}
We recall that Theorem \ref{thm:B}(a) claims that $\abs{K_{\zeta,n}(\Fq)}$ for all $n$ can be recovered from $\abs{U_{\zeta,n}(\Fq)}$ and $\abs{N_{\zeta,n}(\Fq)}$ for all $n$. We start by proving a decomposition lemma, following the approach of Feit and Fine \cite{feitfine1960pairs}.

Let $V$ be an $n$-dimensional vector space over any field, then by Fitting's lemma (see for instance \cite[p.\ 113]{jacobsonbasic2}), for any linear map $A\in \End(V)$, there is a unique decomposition $V=K_A\oplus I_A$ such that $A(K_A)\subeq K_A, A(I_A)\subeq I_A$, $A|_{K_A}$ is nilpotent, and $A|_{I_A}$ is nonsingular. 

\begin{lemma}\label{lem:decomp}
Fix a linear map $A\in \End(V)$ and a nonzero scalar $\zeta$. Then a linear map $B\in \End(V)$ satisfies $AB=\zeta BA$ if and only if
\begin{enumerate}
\item $B(K_A)\subeq K_A$, $B(I_A)\subeq I_A$.
\item $A|_{K_A}B|_{K_A}=\zeta B|_{K_A}A|_{K_A}$, $A|_{I_A}B|_{I_A}=\zeta B|_{I_A}A|_{I_A}$.
\end{enumerate}
\end{lemma}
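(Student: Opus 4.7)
The plan is to prove the two directions of the biconditional separately. The backward direction is immediate: by Fitting's construction, $A$ preserves both summands of $V = K_A \oplus I_A$, and condition (a) says the same for $B$, so $AB - \zeta BA$ preserves each summand; condition (b) forces it to vanish on each, hence on all of $V$.

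For the forward direction, my main tool is the explicit realization of the Fitting decomposition: for any integer $N \geq \dim V$, one has $K_A = \ker(A^N)$ and $I_A = \im(A^N)$. A routine induction on $k$, starting from $AB = \zeta BA$, yields
\[
A^k B = \zeta^k B A^k \qquad \text{for all } k \geq 0.
\]
Setting $k = N$ and applying both sides to an element $v \in K_A$ gives $A^N(Bv) = \zeta^N B(A^N v) = 0$, so $Bv \in \ker(A^N) = K_A$; applying them to an element $v = A^N w \in I_A$ gives $Bv = \zeta^{-N} A^N(Bw) \in \im(A^N) = I_A$. This proves (a), and (b) then falls out immediately by restricting $AB = \zeta BA$ to each $A$-invariant summand.

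I do not anticipate a serious obstacle. The one point worth noting is that the argument for $B(I_A) \subeq I_A$ requires inverting $\zeta^N$, which is the only place where the nonvanishing hypothesis on $\zeta$ is used; since $\zeta$ is a unit of $\Fq$, this costs nothing. Everything else is standard linear algebra given Fitting's lemma.
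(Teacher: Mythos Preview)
Your proof is correct and takes a somewhat different route from the paper's. The paper writes everything in block form relative to $V = K_A \oplus I_A$: with $A = \begin{bmatrix} N & 0 \\ 0 & U \end{bmatrix}$ ($N$ nilpotent, $U$ invertible) and $B = \begin{bmatrix} B_1 & B_2 \\ B_3 & B_4 \end{bmatrix}$, the equation $AB = \zeta BA$ splits into four block equations, and the off-diagonal ones $NB_2 = \zeta B_2 U$, $UB_3 = \zeta B_3 N$ are shown to force $B_2 = B_3 = 0$ via a ``minimal $r$ with $N^r B_2 \neq 0$'' contradiction. You instead use the description $K_A = \ker A^N$, $I_A = \im A^N$ together with the iterated relation $A^N B = \zeta^N B A^N$, obtaining $B$-invariance of each summand directly and coordinate-free. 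Your argument is shorter and avoids block matrices entirely; the paper's presentation, on the other hand, sets up the block-matrix language that is reused verbatim in the proof of Theorem~\ref{thm:B}(c). One small wording point: the lemma is stated over an arbitrary field, so ``$\zeta$ is a unit of $\Fq$'' should simply read ``$\zeta$ is a nonzero scalar, hence invertible''; the mathematics is unaffected.
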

\begin{proof}
Having the decomposition $V=K_A\oplus I_A$, any linear map $X\in \End(V)$ can be written as a matrix
\begin{equation}
X=\begin{bmatrix}
X_1 & X_2\\
X_3 & X_4
\end{bmatrix}, 
\begin{array}{l}
X_1\in \End(K_A), X_2\in \Hom(I_A,K_A),\\
X_3\in \Hom(K_A,I_A), X_4\in \End(I_A).
\end{array} 
\end{equation}

Then we have
\begin{equation}
A=\begin{bmatrix}
N & 0\\
0 & U
\end{bmatrix}
\end{equation}
where $N\in \End(K_A)$ is nilpotent and $U\in \End(I_A)$ is nonsingular. For an arbitrary $B=\begin{bmatrix}
B_1 & B_2\\
B_3 & B_4
\end{bmatrix}$, the equation $AB=\zeta BA$ is equivalent to
\begin{equation}
\begin{cases}
NB_1 = \zeta B_1 N,\\
NB_2 = \zeta B_2 U,\\
UB_3 = \zeta B_3 N,\\
UB_4 = \zeta B_4 U.
\end{cases}
\end{equation}

We note that $B_2$ must be zero. Suppose not, since $N$ is nilpotent, there exists an integer $r\geq 0$ such that $N^r B_2\neq 0$ but $N^{r+1} B_2=0$. The second equation gives $N^{r+1}B_2=\zeta N^r B_2 U$. The left-hand side is zero, while the right-hand side is nonzero because $\zeta$ is a nonzero scalar and $U$ is nonsingular. This yields a contradiction.

A similar argument shows that $B_3=0$, completing the proof of the lemma.
\end{proof}

Let $V=\Fq^n$. To choose $A,B\in \End(V)$ with $AB=\zeta BA$, because of Lemma \ref{lem:decomp}, it suffices to choose a decomposition $V=K\oplus I$, and then choose $A_K, B_K\in \End(K), A_I, B_I\in \End(I)$ such that $A_K$ is nilpotent, $A_K B_K=\zeta B_K A_K$, $A_I$ is nonsingular, and $A_I B_I = \zeta B_I A_I$. We have
\begin{equation}
\abs{K_{\zeta,n}(\Fq)}=\sum_{s+t=n} h(s,t) \abs{N_{\zeta,s}(\Fq)} \abs{U_{\zeta,t}(\Fq)},
\end{equation}
where $h(s,t)$ is the number of ordered pairs $(K,I)$ of subspaces of $V$ such that $\dim K=s, \dim I=t$. 

It is noted by Feit and Fine \cite[Equation (5)]{feitfine1960pairs} that
\begin{equation}
h(s,t)=\frac{\abs{\GL_n(\Fq)}}{\abs{GL_s(\Fq)}\abs{GL_t(\Fq)}}.
\end{equation}

It follows that
\begin{align}
\sum_{n=0}^\infty \frac{\abs{K_{\zeta,n}(\Fq)}}{\abs{\GL_n(\Fq)}} x^n &= \sum_{n=0}^\infty \sum_{s+t=n} \frac{\abs{\GL_n(\Fq)}}{\abs{GL_s(\Fq)}\abs{GL_t(\Fq)}} \abs{N_{\zeta,s}(\Fq)} \abs{U_{\zeta,t}(\Fq)} \frac{1}{\abs{\GL_n(\Fq)}} x^n  \\
&=\sum_{s,t\geq 0} \frac{\abs{N_{\zeta,s}(\Fq)}}{\abs{GL_s(\Fq)}}\frac{\abs{U_{\zeta,t}(\Fq)}}{\abs{GL_t(\Fq)}} x^{s+t}\\
&=\parens*{\sum_{s=0}^\infty \frac{\abs{N_{\zeta,s}(\Fq)}}{\abs{GL_s(\Fq)} }x^s} \parens*{\sum_{t=0}^\infty \frac{\abs{U_{\zeta,t}(\Fq)}}{\abs{GL_t(\Fq)}}x^t},
\end{align}
completing the proof of Theorem \ref{thm:B}(a).

\section{Proof of Theorem \ref{thm:B}(b)}
Recall that the goal of Theorem \ref{thm:B}(b) is to determine $\abs{U_{\zeta,n}(\Fq)}$, namely, to enumerate the paris of matrices $(A,B)\in \Mat_n(\Fq)\times \Mat_n(\Fq)$ such that $AB=\zeta BA$ and $A$ is nonsingular. To do so, following the approach of Feit and Fine, let $\beta$ be a similarity class of $n\times n$ matrices. By a standard orbit-stabilizer argument, for $B$ in $\beta$, the number of nonsingular matrices $A$ such that $ABA^{-1}=\zeta B$ is either $\abs{\GL_n(\Fq)}/\abs{\beta}$ or zero. Moreover, this number is not zero if and only if $B$ is similar to $\zeta B$. We now give a sufficient and necessary condition for it in terms of $\beta$. 

We recall that each class $\beta$ corresponds to a unique rational canonical form. It is characterized by an $n$-dimensional module $M_\beta$ of the polynomial ring $\Fq[t]$. Such a module can be uniquely expressed in the form of
\begin{equation}
M_\beta = \frac{\Fq[t]}{(g_1(t))} \oplus \frac{\Fq[t]}{(g_2(t))} \oplus \dots \oplus \frac{\Fq[t]}{(g_r(t))}
\end{equation}
for monic polynomials $g_1,\dots,g_r$ such that $g_i$ divides $g_{i+1}$ for all $1\leq i\leq r-1$. For a positive integer $m$, we say a monic polynomial $g$ to be in $\cP_m$ if $g(t)=t^b G(t^m)$ for some nonnegative integer $b$ and monic polynomial $G$. For example, a polynomial is in $\cP_2$ if it is either even or odd. 

\begin{lemma}\label{lem:similar}
Let $B$ be an $n\times n$ matrix over any field, and let $\zeta$ be an $m$-th root of unity. Then $B$ is similar to $\zeta B$ if and only if the polynomials $g_1,\dots,g_r$ associated to the rational canonical form of $B$ are in $\cP_m$. 
\end{lemma}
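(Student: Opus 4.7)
The plan is to compute the invariant factors of $\zeta B$ in terms of those of $B$ and then match them up. Since similarity is detected by the rational canonical form, $B$ is similar to $\zeta B$ if and only if their ordered lists of invariant factors agree.

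To compute the invariant factors of $\zeta B$, I would view $M_{\zeta B}$ as the pullback of the $\Fq[t]$-module $M_B$ along the $\Fq$-algebra automorphism $\psi \colon \Fq[t] \to \Fq[t]$, $\psi(t) = \zeta t$. For any monic $g$ of degree $d$, a direct computation of the annihilator of $\bar 1$ in the pullback gives $\psi^*(\Fq[t]/(g)) \cong \Fq[t]/(\tilde g)$, where $\tilde g(t) := \zeta^{d} g(\zeta^{-1} t)$ is monic of degree $d$. Since $\psi$ preserves divisibility (indeed $\widetilde{gh} = \tilde g \cdot \tilde h$), the invariant factors of $\zeta B$ are exactly $\tilde g_1 \mid \tilde g_2 \mid \cdots \mid \tilde g_r$. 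Hence $B$ is similar to $\zeta B$ if and only if $g_i = \tilde g_i$ for every $i$.

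It then remains to characterize monic polynomials $g$ satisfying $g = \tilde g$. Writing $g(t) = \sum_{j=0}^d a_j t^j$, one has $\tilde g(t) = \sum_j a_j \zeta^{d-j} t^j$, so $g = \tilde g$ is equivalent to $a_j(1 - \zeta^{d-j}) = 0$ for every $j$. Since $\zeta$ has order $m$, this forces every nonzero $a_j$ to satisfy $m \mid d - j$. Setting $b := d \bmod m$, the nonzero coefficients of $g$ lie at degrees $b, b+m, \dots, d$, which is precisely the statement that $g(t) = t^b G(t^m)$ for some monic polynomial $G$ of degree $(d-b)/m$, i.e., $g \in \cP_m$.

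The main technical point is the pullback computation. An alternative is to pass to an algebraic closure and use Jordan form directly: multiplying $B$ by $\zeta$ sends each Jordan block at eigenvalue $\alpha$ to a Jordan block of the same size at $\zeta\alpha$, so similarity with $\zeta B$ amounts to the multiset of (eigenvalue, block-size) pairs being invariant under $\alpha \mapsto \zeta\alpha$. Either route reduces the lemma to the elementary observation about $g = \tilde g$ in the last paragraph.
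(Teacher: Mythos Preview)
Your argument is correct and mirrors the paper's proof: both compute the invariant factors of $\zeta B$ as $\tilde g_i(t)=\zeta^{\deg g_i}g_i(\zeta^{-1}t)$ (you phrase this as pullback along the automorphism $t\mapsto\zeta t$, the paper as the action of $\zeta t$ on each cyclic summand), then invoke uniqueness of the rational canonical form to force $g_i=\tilde g_i$ and read off the coefficient condition defining $\cP_m$. The only cosmetic slip is writing $\Fq[t]$ where the lemma is stated over an arbitrary field~$\F$.
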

\begin{proof}
We denote the ground field by $\F$. An endomorphism $B$ over a vector space $V$ determines a module over the polynomial ring $\F[t]$ by letting $t\cdot v=Bv$ for $v\in V$. We denote this $\F[t]$-module by $(B \acts V)$. The isomorphism class of this $\F[t]$-module determines the rational canonical form of $B$.

Let $g_1,\dots,g_h$ be the polynomials associated to the rational canonical form of $B$. Then
\begin{equation}
(B \acts V)\cong \frac{\F[t]}{(g_1(t))} \oplus \frac{\F[t]}{(g_2(t))} \oplus \dots \oplus \frac{\F[t]}{(g_r(t))}.
\end{equation}

We now compute $(\zeta B\acts V)$. We have
\begin{align}
(\zeta B\acts V) &\cong (\zeta t\acts M_B)\\
&\cong \bigoplus_{i=1}^r \parens*{ \zeta t \acts \frac{\F[t]}{(g_i(t))} }\\
&\cong \bigoplus_{i=1}^r \frac{\F[t]}{(g_i(\zeta^{-1} t))},
\end{align}
where the last isomorphism follows from (a): the action of $\zeta t$ on $\dfrac{\F[t]}{(g_i(t))}$ is cyclic, and (b): the polynomial $x\mapsto g_i(\zeta^{-1} x)$ is a minimal polynomial for $\zeta t$ acting on $\dfrac{\F[t]}{(g_i(t))}$. 

Hence, $B$ is similar to $\zeta B$ if and only if
\begin{equation}
\bigoplus_{i=1}^r \frac{\F[t]}{(g_i(t))}\cong \bigoplus_{i=1}^r \frac{\F[t]}{(g_i(\zeta^{-1} t))}
\end{equation}
as $\F[t]$-modules. Since $g_i(t)$ divides $g_{i+1}(t)$ for all $i$, we have  that $g_i(\zeta^{-1}t)$ divides $g_{i+1}(\zeta^{-1}t)$ as well. By the uniqueness statement about the polynomials associated to the rational canonical form, for each $i$, the \emph{monic} polynomials $g_i(t)$ and $\zeta^{\deg g_i} g_i(\zeta^{-1}t)$ must be equal. Write $g_i(t)=t^d+c_1 t^{d-1}+\dots+c_{d-1} t + c_d$, then $\zeta^d g_i(\zeta^{-1}t)= t^d + \zeta c_1 t^{d-1}+\dots +\zeta^{d-1} c_{d-1} t + \zeta^d c_d$. Since $\zeta$ is an $m$-th root of unity, we observe that $g_i(t)=\zeta^d g_i(\zeta^{-1}t)$ if and only if $c_j=0$ for all $j$ not divisible by $m$. This is equivalent to saying that $g_i(t)$ is in $\cP_m$. 
\end{proof}

Let $\fS_{\zeta,n}(\Fq)$ denote the set of similarity classes $\beta$ of $n\times n$ matrices over $\Fq$ such that some (equivalently, every) matrix $B$ in $\beta$ is similar to $\zeta B$. We have
\begin{align}
\abs{U_{\zeta,n}(\Fq)} &= \sum_{B\in \Mat_n(\Fq)} \abs{\set{A\in \GL_n(\Fq): ABA^{-1}=\zeta B}} \\
&= \sum_{\beta} \sum_{B\in \beta} \abs{\set{A\in \GL_n(\Fq): ABA^{-1}=\zeta B}} \\
&= \sum_{\beta\in \fS_{\zeta,n}(\Fq)} \abs{\beta} \frac{\abs{\GL_n(\Fq)}}{\abs{\beta}} + \sum_{\beta\notin \fS_{\zeta,n}(\Fq)} 0 \\
&= \abs{\GL_n(\Fq)} \abs{\fS_{\zeta,n}(\Fq)}.
\end{align}

We now count $\abs{\fS_{\zeta,n}(\Fq)}$. By Lemma \ref{lem:similar}, a similarity class in $\fS_{\zeta,n}(\Fq)$ is characterized by monic polynomials $g_1,g_2,\dots,g_r$ in $\cP_m$ such that every polynomial divides the next. Let $h_i=g_{r+1-i}/g_{r-i}$ for $1\leq i\leq t$, where $g_0=1$. It is easily checked from the definition of $\cP_m$ that $g_1,\dots,g_r$ are all in $\cP_m$ if and only if $h_1,\dots,h_r$ are all in $\cP_m$. Let $b_i=\deg h_i$. The only restriction on the monic $h_i$ is that $h_i$ is in $\cP_m$ and that $\sum_{i=1}^r ib_i=n$. We observe the important fact that the number of monic polynomials in $\cP_m$ of degree $b_i$ is $q^{\floor{b_i/m}}$. Hence to give $g_1,\dots,g_r$, we first choose $(b_i)_{i\geq 1}$ such that $\sum ib_i=n$, and then independently choose $h_i$ in $\cP_m$ of degree $b_i$. It follows that
\begin{equation}
\abs{\fS_{\zeta,n}(\Fq)}=\sum_{\substack{b_i\geq 0\\ \sum ib_i=n}} q^{\floor{b_i/m}}.
\end{equation}

Therefore, 
\begin{align}
\sum_{n=0}^\infty \frac{ \abs{U_{\zeta,n}(\Fq)} }{ \abs{\GL_n(\Fq)} } x^n &= \sum_{n=0}^\infty \abs{\fS_{\zeta,n}(\Fq)} x^n \\
&= \sum_{n\geq 0} \sum_{\substack{ b_i\geq 0 \\ \sum ib_i=n}} q^{\floor{b_i/m}} x^n \\
&= \sum_{b_1,b_2,\dots\geq 0} q^{\floor{b_i/m}} x^{\sum ib_i} \\
&= \prod_{i=1}^\infty \sum_{b=0}^\infty q^{\floor{b/m}} (x^i)^{b}.
\end{align}

By writing $b=km+l$ with $0\leq l<m$, we get 
\begin{align}
\sum_{b=0}^\infty q^{\floor{b/m}} x^{b} &= \sum_{l=0}^{m-1} \sum_{k=0}^\infty q^k x^{km+l} \\
&= \sum_{l=0}^{m-1} \frac{x^l}{1-qx^m}\\
&= \frac{1+x+\dots+x^{m-1}}{1-qx^m}\\
&= \frac{1-x^m}{(1-x)(1-qx^m)}.
\end{align}

Hence, if we define $G_m(x;q)=\dfrac{1-x^m}{(1-x)(1-qx^m)}$, then we have
\begin{equation}
\sum_{n=0}^\infty \frac{ \abs{U_{\zeta,n}(\Fq)} }{ \abs{\GL_n(\Fq)} } x^n = \prod_{i=1}^\infty G_m(x^i;q),
\end{equation}
finishing the proof of Theorem \ref{thm:B}(b).

\section{Proof of Theorem \ref{thm:B}(c)}
We follow the idea of Fine and Herstein \cite{fineherstein1958} to determine $\abs{N_{\zeta,n}(\Fq)}$, namely, the number of matrix pairs $(A,B)\in \Mat_n(\Fq)\times \Mat_n(\Fq)$ such that $AB=\zeta BA$ and $A$ is nilpotent. In fact, we will show that the situation is completely the same as the case $\zeta=1$ studied in \cite{fineherstein1958}. 

Associate to each similarity class of $n$ by $n$ nilpotent matrices a partition $\pi$ of $n$:
\begin{equation}
\pi: n=a_1 \cdot 1 + a_2 \cdot 2 + \dots,
\end{equation}
so that a representative of the similarity class associated to $\pi$ is given by
\begin{equation}
A_\pi=\begin{bmatrix}
0_{a_1} & • & • & • & • & • & • \\ 
• & 0_{a_2} & 1_{a_2} & • & • & • & • \\ 
• & • & 0_{a_2} & • & • & • & • \\ 
• & • & • & 0_{a_3} & 1_{a_3} & • & • \\ 
• & • & • & • & 0_{a_3} & 1_{a_3} & • \\ 
• & • & • & • & • & 0_{a_3} & • \\ 
• & • & • & • & • & • & \ddots
\end{bmatrix},
\end{equation}
where $0_a$ and $1_a$ denote the $a$ by $a$ zero matrix and the $a$ by $a$ identity matrix, respectively. 

Let $\alpha(\pi)$ denote the similarity class associated to $\pi$. Since the number of matrices $B$ such that $AB=\zeta BA$ only depends on the similarity class of $A$, we have
\begin{equation}
\abs{N_{\zeta,n}(\Fq)} = \sum_{\pi \vdash n} \abs{\alpha(\pi)} \abs{\set{ B\in \Mat_n(\Fq): A_\pi B = \zeta B A_\pi }}.
\end{equation}

For any fixed scalar $\zeta\neq 0$, it is elementary to check that $A_\pi B = \zeta B A_\pi$ if and only if $B$ is of the following form:
\begin{equation}
\begin{bmatrix}
\begin{array}{c|cc|ccc|cccc|c}
B_{1,1}^1 & • & B_{1,2}^1 & • & • & B_{1,3}^1 & • & • & • & B_{1,4}^1 & \cdots \\
\hline 
B_{2,1}^1 & B_{2,2}^1 & B_{2,2}^2 & • & B_{2,3}^1 & B_{2,3}^2 & • & • & B_{2,4}^1 & B_{2,4}^2 & • \\ 
• & • & \zeta B_{2,2}^1 & • & • & \zeta B_{2,3}^1 & • & • & • & \zeta B_{2,4}^1 & \cdots \\ 
\hline
B_{3,1}^1 & B_{3,2}^1 & B_{3,2}^2 & B_{3,3}^1 & B_{3,3}^2 & B_{3,3}^3 & • & B_{3,4}^1 & B_{3,4}^2 & B_{3,4}^3 & • \\ 
• & • & \zeta B_{3,2}^1 &  & \zeta B_{3,3}^1 & \zeta B_{3,3}^2 & • & • & \zeta B_{3,4}^1 & \zeta B_{3,4}^2 & \cdots \\ 
• & • & • & • & • & \zeta^2 B_{3,3}^1 & • & • & • & \zeta^2 B_{3,4}^1 & • \\ 
\hline
B_{4,1}^1 & B_{4,2}^1 & B_{4,2}^2 & B_{4,3}^1 & B_{4,3}^2 & B_{4,3}^3 & B_{4,4}^1 & B_{4,4}^2 & B_{4,4}^3 & B_{4,4}^4 & • \\ 
• & • & \zeta B_{4,2}^1 &  & \zeta B_{4,3}^1 & \zeta B_{4,3}^2 & • & \zeta B_{4,4}^1 & \zeta B_{4,4}^2 & \zeta B_{4,4}^3 & • \\ 
• & • & • & • & • & \zeta^2 B_{4,3}^1 & • & • & \zeta^2 B_{4,4}^1 & \zeta^2 B_{4,4}^2 & \cdots \\ 
• & • & • & • & • & • & • & • & • & \zeta^3 B_{4,4}^1 & • \\ 
\hline
\vdots & • & \vdots & • & \vdots & • & • & • & \vdots & • & \ddots
\end{array} 
\end{bmatrix},
\end{equation}
where each $B_{i,j}^k$ is an arbitrary $a_i$ by $a_j$ matrix, chosen independently. We note that the count $\abs{\set{ B\in \Mat_n(\Fq): A_\pi B = \zeta B A_\pi }}$ does not depend on $\zeta$. Hence,
\begin{equation}
\abs{N_{\zeta,n}(\Fq)} = \abs{N_{1,n}(\Fq)}.
\end{equation}

It is known in \cite[Equation (6)]{feitfine1960pairs} that
\begin{equation}
\abs{N_{1,n}(\Fq)} = \abs{\GL_n(\Fq)}\sum_{\pi\vdash n} \frac{1}{f(a_1)f(a_2)\dots},
\end{equation}
where $f(a):=(1-q^{-1})(1-q^{-2})\dots (1-q^{-a})$. 

Hence, 
\begin{align}
\sum_{n=0}^\infty \frac{\abs{N_{\zeta,n}(\Fq)}}{\abs{\GL_n(\Fq)}} x^n &= \sum_{n=0}^\infty \sum_{\pi\vdash n} \frac{1}{f(a_1)f(a_2)\dots} x^n \\
&= \sum_{a_1,a_2,\dots\geq 0} \frac{1}{f(a_1)f(a_2)\dots} x^{\sum ia_i} \\
&= \prod_{i=1}^\infty \sum_{a=0}^\infty \frac{1}{f(a)} (x^i)^a\\
&= \prod_{i=1}^\infty H(x^i;q),
\end{align}
where
\begin{equation}
H(x;q):=\sum_{a=0}^\infty \frac{1}{f(a)} x^a = \frac{1}{(1-x)(1-xq^{-1})(1-xq^{-2})\dots}
\end{equation}
by a classical identity due to Euler. This concludes the proof of Theorem \ref{thm:B}(c), and hence proves Theorem \ref{thm:B} and Theorem \ref{thm:A}.

\section{Discussions}
We note from the work of Bryan and Morrison \cite[\S 3.1]{bryanmorrison2015motivic} that $\abs{U_{1,n}(\Fq)}$ and $\abs{N_{1,n}(\Fq)}$ ``determine'' each other. The key ingredient is that either of the quantities above is the point count of the variety of modules over the ``commutative'' plane $\Spec \Fq[x,y]$ supported on a certain subset of closed points. A module is determined by its localizations at closed points in its support, so both $\abs{U_{1,n}(\Fq)}$ and $\abs{N_{1,n}(\Fq)}$ are determined by the point count of the variety of modules supported at a point. Since the commutative plane ``looks the same everywhere'' locally in light of the Cohen structure theorem (the complete localization of $\Fq[x,y]$ at any closed point is isomorphic to $\F[[x,y]]$ for some field extension $\F$ of $\Fq$), we can reverse the process, so that either of $\abs{U_{1,n}(\Fq)}$ and $\abs{N_{1,n}(\Fq)}$ determines the point count of the variety of modules supported at a point, and hence determines each other. 

However, for $\zeta\neq 1$, Theorem \ref{thm:B} shows that $\abs{N_{\zeta,n}(\Fq)}$ does not depend on $\zeta$ while $\abs{U_{\zeta,n}(\Fq)}$ does. Is it still possible to recover $\abs{U_{\zeta,n}(\Fq)}$ from $\abs{N_{\zeta,n}(\Fq)}$ together with the geometry of the quantum plane $xy=\zeta yx$ (which will depend on $\zeta$)?

\subsection*{Acknowledgements}
We thank Jason Bell for proposing questions that inspire this work. We thank Jeffery Lagarias and Weiqiang Wang for fruitful conversations. 

\bibliography{paper_quantum_planes.bbl}

\end{document}